\numberwithin{equation}{section}
  \newcommand{\const}{\rm const}
\theoremstyle{plain}
\newtheorem{theorem}{Theorem}[section]
\theoremstyle{theorem}
\newtheorem{definition}{Definition}[section]
\newtheorem{remark}{Remark}[section]
\renewenvironment{proof}{{\bf{Proof.}}}{\hfill $\Box$ \\}
\DeclareMathOperator*{\esssup}{ess\,sup}
\title{\large \textbf{BOCHNER-RIESZ OPERATORS IN GRAND LEBESGUE SPACES}}
\footnotesize\date{}
\author{\normalsize \textbf{Maria Rosaria FORMICA ${}^{1}$},   \normalsize\textbf{Eugeny OSTROVSKY
${}^2$} and \normalsize\textbf{Leonid SIROTA ${}^3$}}
\begin{document}

\maketitle

\begin{center}
{\footnotesize ${}^{1}$ Universit\`{a} degli Studi di Napoli \lq\lq Parthenope\rq\rq, via Generale Parisi 13,\\
Palazzo Pacanowsky, 80132,
Napoli, Italy.} \\

\vspace{2mm}

{\footnotesize e-mail: mara.formica@uniparthenope.it} \\

\vspace{4mm}

{\footnotesize ${}^{2,\, 3}$  Bar-Ilan University, Department of Mathematics and Statistics, \\
52900, Ramat Gan, Israel.} \\

\vspace{2mm}

{\footnotesize e-mail: eugostrovsky@list.ru}\\

\vspace{2mm}

{\footnotesize e-mail: sirota3@bezeqint.net} \\

\end{center}

 \vspace{3mm}

\begin{abstract}
We provide the conditions for the boundedness of the Bochner-Riesz
operator acting between two different Grand Lebesgue Spaces.
Moreover we obtain a lower estimate for the constant appearing in
the Lebesgue-Riesz norm estimation of the Bochner-Riesz operator and
we investigate the convergence of the Bochner-Riesz approximation in
Lebesgue-Riesz spaces.
\end{abstract}

\vspace{4mm}

 {\it \footnotesize Keywords:}
{ \footnotesize Bochner-Riesz operator, Lebesgue-Riesz spaces, Grand
Lebesgue spaces, Fourier transform, convolution,
Beckner-Brascamp-Lieb inequality, Gaussian density, dominated
convergence theorem, Bessel's and Gamma functions, modulus of
continuity.}

\vspace{2mm}

\noindent {\it \footnotesize 2010 Mathematics Subject
Classification}:
{\footnotesize 46E30,   
44A35,   
42B15.   
}

\section{Introduction}\label{Intro}

\vspace{5mm}

 The well-known (pseudo-differential) Bochner-Riesz linear operator $B_R^{\alpha}[f]$, acting on measurable functions $ f: \mathbb{R}^n \to  \mathbb{R}$ ($n>1$), is defined as follows

\begin{equation} \label{Boch}
B_R^{\alpha}[f](t) \stackrel{def}{=} \int_{\mathbb{R}^n} e^{-i(t,y)}
\ \left( \ 1 - \frac{|y|^2}{R^2}  \ \right)^{\alpha}_+ \
\tilde{f}(y) \ dy,
\end{equation}
where
\begin{equation} \label{restrictions}
R = {\const} > 0, \ \ \alpha = {\const}, \ \ z_+  = \max(z,0), \ \ z
\in \mathbb{R},
\end{equation}

\noindent $(x,y)$ stands for the scalar (inner) product of two
vectors $ x,y\in\mathbb{R}^n$,  $ |y|^2 := (y,y)$ and $\tilde{f} \ $
denotes, as ordinary, the Fourier transform

\begin{equation} \label{Fourier}
\tilde{f}(y) = \int_{\mathbb{R}^n}e^{i(x,y)} \, f(x) \, dx.
\end{equation}

\vspace{4mm}

 The applications of these operators are described in particular
 in functional analysis (see \cite{Duoandikoetxea},  \cite{Grafakos}) and in statistics of random processes and field (see \cite{Buldygin Ill},
\cite{Ostrovsky6}).

\vspace{3mm}

 The classical Lebesgue-Riesz norm $ ||f||_p, \  p \in [1,\infty]$, for the function $f$,
 is denoted by
$$
||f||_p := \left[ \ \int_{\mathbb{R}^n} |f(x)|^p \ dx \
\right]^{1/p}, \ \ \ 1 \le p < \infty,
$$

$$
||f||_{\infty} := \esssup_{x \in\mathbb{R}^n} |f(x)|, \ \ \ p=\infty
$$
 and the corresponding Banach space is, as usually,

 $$
   L_p = L_p(\mathbb{R}^n) = \{  \ f: \ ||f||_p < \infty \ \}.
 $$

\vspace{4mm}

 Denote, for an arbitrary linear or quasi linear operator $U$, acting from $ L_p$ to $ L_q, \ p,q \ge 1 $, its norm
$$
||U||_{p,q} = ||U||_{p \to q} := \sup_{0 \ne f \in L_p} \left[ \ \frac{||Uf||_q}{||f||_p} \  \right].
$$
 Of course, the operator $ U$ is bounded as the operator acting from the space $ L_p  $ into the space $ L_q $
 iff $ ||U||_{p,q} < \infty $.

 More generally, for the operator $U $  acting from some Banach
space $ F$ equipped with the norm $ ||\cdot||_F $ into another (in
the general case) Banach space $D $,  having the norm $||\cdot||_D
$, denote as usually
$$
||U||_{F \to D} := \sup_{0 \ne f \in F} \left[ \
\frac{||Uf||_D}{||f||_F} \ \right].
$$

 \vspace{3mm}

There exists a huge numbers of works devoted to the $ \ p,q \ $
estimates for Bochner-Riesz operators $ \ B_R^{\alpha}, \ $ as a
rule for the case  $ \ q = p, \ $ see e.g. \cite[chapter
5]{Grafakos},
\cite{Cordoba1979,Duoandikoetxea,Fefferman2,Karasev,LuYanBook2013,Stein1,Stein2},
etc.

 \vspace{2mm}

Our aim, in this paper, is to extend some results contained in the
above mentioned works concerning upper estimates for the norm of the
Bochner-Riesz operator, in the case of different Lebesgue-Riesz
spaces (Section \ref{estimates Lebesgue}) and to extend these
estimates to the so-called Grand Lebesgue Spaces (GLS), see Section
\ref{Section Boundedness GLS}.

We deduce also
a non trivial quantitative lower estimate for the coefficient in the
Lebesgue-Riesz norm estimation for the Bochner-Riesz operator
(Section \ref{Section lower bound BR}) and we study the convergence
of the Bochner-Riesz approximation in Lebesgue-Riesz spaces (Section
\ref{section convergence}).

\vspace{5mm}

\section{Norm estimates for the Bochner-Riesz operator acting between different Lebesgue-Riesz
spaces}\label{estimates Lebesgue}

\vspace{5mm}

 We clarify slightly the known results about the Lebesgue-Riesz $p,r$ norms for the operators $B_R^{\alpha}$ defined in \eqref{Boch},
 see e.g.
\cite{Duoandikoetxea,Grafakos,Fefferman1,Karasev,Stein2}, ecc.

\vspace{3mm}

 It is important to observe that the Bochner-Riesz operator $ \ B_R^{\alpha} $ may be rewritten as a
 convolution, namely

\begin{equation} \label{conv representation}
 B_R^{\alpha}[f](t) =  f*K_{\lambda}^R(t) = \int_{\mathbb{R}^n} f(t-s) \ K_{\lambda}^R (s) \ ds,
\end{equation}
where $ R = {\const} > 0 $,

$$
K_{\lambda}^R(z) =  C(\alpha) \ R^n \
\frac{J_{\lambda}(R|z|)}{|Rz|^{\lambda}}, \  \ \ \lambda =\alpha +
\frac{n}{2},
$$

$$
C(\alpha) := \frac{1}{\Gamma(\alpha + 1)}, \ \ \ \alpha > -1,
$$
 $\Gamma(\cdot)$ is the Gamma-function, $J_{\lambda}(\cdot) $ is the Bessel function of order
$\lambda$ and $|z| = \sqrt{(z,z)}$.

Notice that, under our restriction on $\alpha$,

\begin{equation} \label{norming K}
\int_{\mathbb{R}^n} K_{\lambda}^R(t)\ dt = 1.
\end{equation}

\vspace{3mm}

Evidently,

\begin{equation} \label{dilatations}
K_{\lambda}^R(z) = R^n \ K_{\lambda}(Rz),
\end{equation}
where in turn

\begin{equation} \label{K repr}
 K_{\lambda}(z) \stackrel{def}{=} \ K_{\lambda}^1(z) =    C(\alpha) \frac{J_{\lambda}(|z|)}{|z|^{\lambda}},
\end{equation}
see, e.g., \cite[pp. 171-172]{Duoandikoetxea}.

Note that, from the identity (\ref{dilatations}), it follows

\begin{equation} \label{R identity}
||K_{\lambda}^R(z)||_q = R^{n - n/q} \  || K_{\lambda}||_q, \ \ \ q
\ge 1,
\end{equation}
as long as $ \ R > 0. \ $ \par

\vspace{4mm}

Assume here and in the sequel

 \begin{equation} \label{restr alpha}
-1 < \alpha \le \alpha_0, \hspace{5mm} \alpha_0  := \frac{n-1}{2},
\end{equation}
and introduce the value

\begin{equation} \label{qo}
q_0 := \frac{n}{(n+1)/2 + \alpha},
\end{equation}
so that
$$
q_0 \geq1 \ \ \Leftrightarrow  \ \ \frac{-(n+1)}{2} < \alpha \le
\alpha_0,
$$
where the inequality on the left hand side is true due to the
restriction $\alpha>-1$.

\vspace{4mm}

Moreover, using the well-known results about the behavior of the
Bessel functions (see, e.g., \cite[p. 172]{Duoandikoetxea}),
$$K_{\lambda}(z)\asymp |z|^{-(\frac{n+1}{2}+\alpha)}, \ \ \ {\rm as} \ |z|\to\infty ,$$
it is easy to estimate
\begin{equation}  \label{q norm}
||K_{\lambda}||_q \asymp (q - q_0)^{-1/q}, \ \ \ q > q_0,
\end{equation}
and $ \ ||K_{\lambda}||_q = \infty \ $  otherwise (see also, e.g.,
\cite[pp. 339-341]{Grafakos}).

Correspondingly,

\begin{equation} \label{qR norm}
||K^R_{\lambda}||_q \asymp  R^{n - n/q} \ (q - q_0)^{-1/q}, \ \ \ q
> q_0.
\end{equation}

\vspace{4mm}

Furthermore, we recall the Beckner-Brascamp-Lieb-Young inequality
for the convolution (see \cite{Beckner,Brascamp})
\begin{equation} \label{Beckner}
||f*g||_r \le \left(\frac{C_p C_q}{C_r}\right)^d \ ||f||_p \
||g||_q, \ \ \ \ 1 + 1/r=1/p + 1/q ,
\end{equation}
where $  p,q, r \geq 1$ and
$$
C_m = \left(\frac{m^{1/m}}{(m')^{1/m'}}\right)^{\frac{1}{2}}, \  \ \
m' = m/(m-1).
$$
\noindent Note that
$$
\frac{C_p C_q}{C_r} \le 1.
$$

Recently in \cite{Formica Ban Alg} has been given a generalization
of the convolution inequality in the context of the Grand Lebesgue
Spaces (see Section \ref{Section Boundedness GLS} for the
definition), built on a unimodular locally compact topological
group.

\vspace{2mm} The estimate \eqref{Beckner} is essentially
non-improbable. Indeed, the equality in (\ref{Beckner}) is attained
iff both functions $ f, g$
are proportional to Gaussian densities, namely there exists positive
constants $c_1,c_2,c_3,c_4$ such that
\begin{equation}\label{gaussian f g}
f(x) = c_1 \exp(-c_2||x||^2), \ \  \  \ g(x) = c_3
\exp(-c_4||x||^2), \ \ \  \ x \in \mathbb R^n,
\end{equation}
where $||\cdot||$ is the euclidean norm. Then the convolution $ f*g$
is also Gaussian and
\begin{equation} \label{Beckner inverse}
||f*g||_r = \left(\frac{C_p C_q}{C_r}\right)^d \ ||f||_p \ ||g||_q,
\  \ \ 1/p + 1/q = 1 + 1/r, \ \ \  p,q,r>1.
\end{equation}

\vspace{2mm} The following result about the boundedness of the
Bochner-Riesz operator, acting between different Lebesgue-Riesz
spaces, holds.

\begin{theorem}\label{boundedness Lebesgue}
Let $p>1$ and $f\in L_p(\mathbb{R}^n)$. Let $\alpha$ be a constant
such that
\begin{equation} \label{restr alpha}
 - 1  < \alpha \le \alpha_0 := \frac{n-1}{2}
\end{equation}
and
\begin{equation} \label{restrict n alpha}
q_0 = \frac{n}{(n+1)/2 + \alpha}.
\end{equation}

\noindent Let $q,r\geq 1$ such that $1 + 1/r = 1/q +1/p$ and assume
$q>q_0, \ r>r_0, \ p\leq p_0$, where
\begin{equation} \label{ro bound}
r_0 = r_0(\alpha,n; \, p) := \frac{p q_0}{p + q_0 - pq_0},
\end{equation}
\begin{equation}\label{p0}
p_0 =q_0'= (1 - 1/q_0)^{-1}
\end{equation}
Define, for $R = {\const} > 0$,
\begin{equation} \label{W introduce}
 W(\alpha,n, R; \ p,r) \stackrel{def}{=}  R^{n(1/p - 1/r)} \cdot [ \ q(p,r) - q_0(\alpha,n) \ ]^{1/p - 1- 1/r
 }.
\end{equation}
Then the Bochner-Riesz operator satisfies
\begin{equation} \label{Lr estim}
||B_R^{\alpha} [f]||_r \le  C(\alpha,R) \ \cdot W(\alpha, n, R; \
p,r) \ \cdot ||f||_p.
\end{equation}
\end{theorem}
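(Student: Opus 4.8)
The plan is to combine the convolution representation \eqref{conv representation} with the Beckner--Brascamp--Lieb--Young inequality \eqref{Beckner} and the explicit norm asymptotics \eqref{qR norm} for the kernel $K_\lambda^R$. First I would write $B_R^\alpha[f] = f * K_\lambda^R$ and apply \eqref{Beckner} with the exponent triple $(p,q,r)$ satisfying $1 + 1/r = 1/p + 1/q$; since the Young constant $C_pC_q/C_r \le 1$, the factor $(C_pC_q/C_r)^d$ is harmless and can be absorbed (or simply bounded by $1$), giving
\begin{equation*}
\|B_R^\alpha[f]\|_r \le \|f\|_p \, \|K_\lambda^R\|_q .
\end{equation*}
So the whole theorem reduces to controlling $\|K_\lambda^R\|_q$, and here I would invoke \eqref{qR norm}, namely $\|K_\lambda^R\|_q \asymp R^{\,n - n/q}\,(q-q_0)^{-1/q}$, valid precisely when $q > q_0$; this is why the hypothesis $q > q_0$ appears.

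Next I would rewrite the two factors $R^{n-n/q}$ and $(q-q_0)^{-1/q}$ in terms of $p$ and $r$ using the relation $1/q = 1 + 1/r - 1/p$, equivalently $1 - 1/q = 1/p - 1/r$. Thus $R^{n-n/q} = R^{\,n(1/p - 1/r)}$, which matches the first factor in the definition \eqref{W introduce} of $W(\alpha,n,R;p,r)$. For the second factor, $-1/q = -(1 + 1/r - 1/p) = 1/p - 1 - 1/r$, so $(q-q_0)^{-1/q} = (q-q_0)^{1/p - 1 - 1/r}$, which is exactly the second factor in \eqref{W introduce} once one writes $q = q(p,r)$ as determined by the constraint. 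Hence $\|K_\lambda^R\|_q \asymp W(\alpha,n,R;p,r)$, and folding the implied constant (which depends only on $\alpha$, $n$ and possibly $R$ through the normalization $C(\alpha)$) into the constant $C(\alpha,R)$ yields the claimed bound \eqref{Lr estim}.

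It remains only to check that the auxiliary restrictions $r > r_0$ and $p \le p_0$ are equivalent to, or follow from, the single usable condition $q > q_0$ together with $p,r \ge 1$ and the linear constraint $1 + 1/r = 1/p + 1/q$. Solving $1/q = 1/p + 1/r - 1 > 1/q_0$ for $r$ at fixed $p$ gives $1/r > 1 - 1/p + 1/q_0$, i.e. $r < $ (a threshold); solving it the other way, or imposing additionally $q \ge 1$ (needed for \eqref{Beckner}) forces $1/p + 1/r - 1 \le 1$, and combining with $r \ge 1$ pins down $p \le q_0' = p_0$ and $r > r_0 = pq_0/(p+q_0-pq_0)$; I would verify this elementary algebra directly, noting $r_0$ is exactly the value of $r$ for which $q = q_0$. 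The main (and really only) obstacle is this bookkeeping: making sure the region $\{p \le p_0,\ r > r_0,\ q > q_0\}$ with $1+1/r = 1/p+1/q$ is nonempty and consistent, and that within it all three exponents are $\ge 1$ so that \eqref{Beckner} legitimately applies; once that is settled the estimate is immediate from Young's inequality and \eqref{qR norm}.
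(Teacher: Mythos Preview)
Your proposal is correct and follows exactly the paper's approach: apply Young's (Beckner--Brascamp--Lieb) inequality to the convolution representation $B_R^\alpha[f]=f*K_\lambda^R$, then plug in the kernel norm asymptotics \eqref{qR norm} and rewrite the exponents via $1/q=1+1/r-1/p$ to obtain $W(\alpha,n,R;p,r)$. One small slip: the condition $q>q_0$ means $1/q<1/q_0$ (not $>$), which is what actually gives $r>r_0$; but you identify $r_0$ correctly as the value of $r$ at which $q=q_0$, so the bookkeeping comes out right.
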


\begin{proof}

If $f \in L_p$ for some value $ p>1$, then by \eqref{Beckner} it
follows
\begin{equation} \label{key estimate}
||B_R^{\alpha}[f]||_r \le ||K_{\lambda}^R||_q \ ||f||_p,
\end{equation}
where $1 + 1/r = 1/q +1/p \ $ and $ \ p, \ q, \ r \ge 1, \ q > q_0
$. On the other words,

$$
q = q(p,r) \stackrel{def}{=} \frac{pr}{pr + p - r }, \ \ \ q > q_0,
\ \ \ r> p.
$$

\noindent Moreover it is easy to verify that $r_0$, defined in
\eqref{ro bound}, is such that $r_0> p$.

\noindent Under our restrictions $W(\alpha,n, R; \ p,r)$ is finite
and positive.

So we conclude that estimate \eqref{Lr estim} holds.

\end{proof}

\vspace{5mm}

\section{Main result. Boundedness of Bochner-Riesz operators in Grand Lebesgue Spaces (GLS)
}\label{Section Boundedness GLS}

\vspace{5mm}

We recall here, for reader convenience, some known definitions and
facts  from the theory of Grand Lebesgue Spaces (GLS).

\begin{definition}
 {\rm Let $ \ \psi = \psi(p), \ p \in (a,b)$, \ $ a,b = {\const}, \ 1 \le  a < b \le \infty$, be a positive measurable numerical valued
 function, such that
 \begin{equation} \label{Positive psi}
 \inf_{p \in (a,b)} \psi(p) > 0.
\end{equation}
The (Banach) Grand Lebesgue Space $G\psi =G\psi(a,b)$ consists of
all the real (or complex) numerical valued measurable functions $f:
\mathbb{R}^n \to \mathbb R$ having finite norm, defined by
\begin{equation} \label{norm psi}
    || f ||_{G\psi} :=\sup_{p \in (a,b)} \left[ \frac{||f||_p}{\psi(p)} \right].
 \end{equation}
We agree to write $ \ G\psi \ $ in the case when $a = 1$ and $b =
\infty$.

\noindent  The function $\psi$ is named {\it generating function}
for the space $G\psi$ and we denote by $\{\psi(\cdot)\}$ the set of
all such functions. }
\end{definition}

\noindent For instance
\begin{equation}\label{psi power}
\psi(p) := p^{1/m},  \  \ \ p \in [1,\infty), \ \ \  m > 0,
\end{equation}
or
\begin{equation}\label{psi grand IwSb}
\psi(p) := (p-a)^{-\alpha} \ (b-p)^{-\beta}, \ \ \ p \in (a,b),  \ \
\  1\leq a<b<\infty, \ \ \ \alpha,\beta \ge 0,
\end{equation}
are generating functions.

\vspace{2mm}

If
$$
 \psi(p) = 1, \ \ p = r;  \  \ \ \ \psi(p) = +\infty,  \ \  p \ne
 r, \ \ \ \ r\in[1,\infty),
$$
where  $C/\infty := 0, \ C \in \mathbb R$ (extremal case), then the
corresponding $ G\psi$ space coincides with the classical
Lebesgue-Riesz space $L_r = L_r(\mathbb{R}^d)$.

\vspace{4mm}

 The Grand Lebesgue Spaces and several generalizations of them have been widely investigated, mainly in the case of GLS on sets of finite measure, (see, e.g.,
  \cite{Ermakov etc. 1986,Iwaniec1,Iwaniec2,Fiorenza2000,Lifl,Ostrovsky1,caponeformicagiovanonlanal2013,formicagiovamjom2015}, etc).
They play an important role in the theory of Partial Differential
Equations (PDEs) (see, e.g.,
\cite{Greco-Iwaniec-Sbordone-1997,Fiorenza-Formica-Gogatishvili-DEA2018,fioformicarakodie2017,Ahmed
Fiorenza Formica at all}, etc.), in interpolation theory (see, e.g.,
\cite{fioforgogakoparakoNA,fiokarazanalanwen2004}), in the theory of
Probability (\cite{Ostrovsky3,Ostrovsky5,ForKozOstr_Lithuanian}), in
Statistics \cite[chapter 5]{Ostrovsky1}, in theory of random fields
\cite{KozOs}, \cite{Ostrovsky4}, in Functional Analysis
\cite{Ostrovsky1}, \cite{Ostrovsky2}, \cite{Ostrovsky4} and so one.
\par

These spaces are rearrangement invariant (r.i.) Banach functional
(complete) spaces; their fundamental functions have been considered
in \cite{Ostrovsky4}. They do not coincide, in the general case,
with the classical Banach rearrangement functional  spaces: Orlicz,
Lorentz, Marcinkiewicz  etc., see \cite{Lifl,Ostrovsky2}. The
belonging of a function $ f: \mathbb{R}^n \to \mathbb{R}$ to some $
G\psi$ space is closely related with its tail function behavior as $
\ t \to 0+ \ $ as well as when $ \ t \to \infty,  \ $ see
\cite{KozOs,KozOsSir2017}.

\vspace{2mm}

The Grand Lebesgue Spaces can be considered not only on the
Euclidean space $ \mathbb{R}^n$ equipped with the Lebesgue measure,
but also on an arbitrary measurable space with sigma-finite
non-trivial measure.

\vspace{2mm}

In the following Theorem we investigate the boundedness of the
Bochner-Riesz operator acting from some Grand Lebesgue Space $ G\psi
=G\psi(a,b)$ into another one $ G\nu$. We will consider the same
restrictions and quantities defined in Theorem \ref{boundedness
Lebesgue}.

 \vspace{2mm}

\begin{theorem}
Let $1\leq a<b\leq \infty$ and $f\in  G\psi(a,b)$. Let $\alpha$ be a
constant such that
\begin{equation*}
 - 1  < \alpha \le \alpha_0 := \frac{n-1}{2}, \ \ \ n>1.
\end{equation*}

\noindent Let $p>1$, $q, r\geq 1$, \  $p_0, q_0, r_0$ and $
W(\alpha,n, R; \ p,r)$ defined as in Theorem \ref{boundedness
Lebesgue}.

\noindent Denote
$$
s = s(\alpha,n;r) := \min \left\{ b, \ \frac{r q_0}{r q_0 + q_0 - r}
\right\},
$$
and
$$d := r_0(\alpha,n; \, b)=\frac{bq_0 }{q_0 +b - b q_0}.$$

\noindent For $r\in(d,\infty)$ let $\nu(r)$ be the following
generating function
\begin{equation} \label{nu function}
\nu(r) = \nu[\psi](r) = \nu[\psi](\alpha,n,R; r) := \inf_{p \in
(a,s)} [ \ W(\alpha,n, R; \, p,r) \ \psi(p) \ ].
\end{equation}
Then
\begin{equation} \label{main res}
|| B_R^{\alpha}[f]||_{G\nu} \le C(\alpha,n,R) \ ||f||_{G\psi}.
\end{equation}
\end{theorem}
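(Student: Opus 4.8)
\noindent The plan is to reduce the statement to the Lebesgue--Riesz estimate of Theorem~\ref{boundedness Lebesgue} and then to optimize over the ``inner'' exponent $p$. By the definition \eqref{norm psi} of the Grand Lebesgue norm, read on the interval $(d,\infty)$ on which $\nu$ is defined, it suffices to prove that
$$
\|B_R^{\alpha}[f]\|_r \;\le\; C(\alpha,n,R)\,\nu(r)\,\|f\|_{G\psi} \qquad\text{for every } r\in(d,\infty);
$$
dividing by $\nu(r)$ and taking the supremum over such $r$ then gives \eqref{main res}. Note that $\nu(r)>0$ because $W(\alpha,n,R;p,r)>0$ and $\psi(p)>0$, and $\nu(r)<\infty$ once the infimum in \eqref{nu function} is known to be over a nonempty set, so the division is legitimate.

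Fix $r\in(d,\infty)$. The first task is to check that every $p$ in the open interval $(a,s)$ over which $\nu(r)$ is defined is admissible for Theorem~\ref{boundedness Lebesgue}, and that this interval is nonempty. With $q=q(p,r)=pr/(pr+p-r)$ as in the proof of that theorem, clearing denominators and using $q_0\ge1$ (which holds under the standing restriction on $\alpha$) gives, for $p>1$,
$$
q(p,r)>q_0 \;\Longleftrightarrow\; p<p^{*}(r):=\frac{rq_0}{rq_0+q_0-r} \;\Longleftrightarrow\; r>r_0(\alpha,n;p),
$$
where $r_0$ is as in \eqref{ro bound}; the second equivalence uses the identity $r_0(\alpha,n;p^{*}(r))=r$ together with the fact that $p\mapsto r_0(\alpha,n;p)$ is strictly increasing on $(1,p_0)$, $p_0=q_0'$, and that $p^{*}(r)<p_0$ for every $r$. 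Consequently, since $s=\min\{b,p^{*}(r)\}$, any $p\in(a,s)$ automatically satisfies $p>1$ (because $a\ge1$), $p<b$, $q(p,r)>q_0$, $r>r_0(\alpha,n;p)$ and $p<p^{*}(r)<p_0$ --- that is, all the restrictions of Theorem~\ref{boundedness Lebesgue}. The interval $(a,s)$ is nonempty exactly because $r>d:=r_0(\alpha,n;b)$: by monotonicity $r>r_0(\alpha,n;b)\ge r_0(\alpha,n;a)$, and $r>r_0(\alpha,n;a)$ is equivalent, again via $r_0(\alpha,n;p^{*}(r))=r$ and monotonicity, to $p^{*}(r)>a$, hence to $s>a$. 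This exponent bookkeeping --- the three equivalences above, the monotonicity of $p\mapsto r_0(\alpha,n;p)$, and the bound $p^{*}(r)<p_0$ --- is the step requiring the most care and is essentially the only obstacle.

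Granting this, the conclusion is immediate. For each $p\in(a,s)$, Theorem~\ref{boundedness Lebesgue} combined with the pointwise bound $\|f\|_p\le\psi(p)\,\|f\|_{G\psi}$ furnished by \eqref{norm psi} gives
$$
\|B_R^{\alpha}[f]\|_r \;\le\; C(\alpha,R)\,W(\alpha,n,R;p,r)\,\|f\|_p \;\le\; C(\alpha,R)\,W(\alpha,n,R;p,r)\,\psi(p)\,\|f\|_{G\psi}.
$$
Taking the infimum over $p\in(a,s)$ and recalling the definition \eqref{nu function} of $\nu(r)$ yields the displayed per-$r$ estimate (with $C(\alpha,n,R):=C(\alpha,R)$), completing the proof.
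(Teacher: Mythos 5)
Your proof is correct and follows essentially the same route as the paper: apply the Lebesgue--Riesz bound of Theorem~\ref{boundedness Lebesgue}, use $\|f\|_p\le\psi(p)\,\|f\|_{G\psi}$, take the infimum over $p\in(a,s)$ to produce $\nu(r)$, and then pass to the $G\nu$ norm. The only difference is that you spell out the exponent bookkeeping (nonemptiness of $(a,s)$ and admissibility of each $p$ in it) that the paper compresses into ``taking the minimum over $p$ subject to our limitations,'' which is a welcome addition rather than a deviation.
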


\noindent \begin{proof}

The proof is simple and alike as the one in \cite{Ostrovsky5}. First
we observe that $\nu(r)$ is finite. One can assume, without loss of
generality, $ ||f||_{G\psi} = 1$, then $ ||f||_p \le \psi(p), \ p
\in (a,b)$.
 Applying the inequality (\ref{Lr estim}) we have
\begin{equation} \label{Gpsi estim}
||B_R^{\alpha}[f]||_r \le  C(\alpha, n,R) \ \cdot W(\alpha,n, R; \ p,r) \ \cdot \psi(p).
\end{equation}

Taking the minimum over $p$ subject to  our limitations, we get
\begin{equation} \label{Gpsi min estim1}
\begin{split}
||B_R^{\alpha} [f]||_r  & \le  C(\alpha,n,R) \, \cdot \inf_{p \in
(a,s)} [ \ W(\alpha,R; \, p,r) \ \cdot \psi(p) \ ]\\
&= C(\alpha,n,R) \ \nu(r) = C(\alpha,n,R) \ \nu(r) ||f||_{G\psi},
\end{split}
\end{equation}
which is quite equivalent to our claim in (\ref{main res}).

\end{proof}

\vspace{5mm}

\section{Lower bound for the coefficient in the Lebesgue-Riesz norm estimate
for the Bochner-Riesz operator.}\label{Section lower bound BR}


\vspace{5mm}

Let $p,r>1$, $n>1$ and let us introduce the following variable

$$
Q_n(p,r) \stackrel{def}{=} \sup_{\alpha > 0} \sup_{R > 0}
W(\alpha,n,R; \ p,r),
$$
where $W(\alpha,n,R; \ p,r)$ is defined in Section \ref{estimates
Lebesgue}. Our target in this Section is a {\it lower} bound for the
above variable.

\vspace{2mm}

\begin{theorem}\label{Th lower bound}
Let $p,r> 1$, $n>1$ and
$$
\Theta(n,p) := (2 \ \pi)^{ n(1 - p)/2p  } \ p^{-n/2p}.
$$
Then
\begin{equation} \label{lower bound}
Q_n(p,r) \ge \Theta \left(n, \frac{pr}{pr + p - r} \ \right), \  \ \
r> p.
\end{equation}

\end{theorem}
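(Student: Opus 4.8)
The plan is to realize the lower bound \eqref{lower bound} by passing to the Gaussian limit of the Bochner--Riesz family. The starting observation is that $\Theta(n,p)$ is nothing but the $L^p$ norm of the standard Gaussian density: if $\gamma(x):=(2\pi)^{-n/2}e^{-|x|^2/2}$, then $\int_{\mathbb R^n}\gamma=1$ and $\|\gamma\|_p^p=(2\pi)^{-np/2}\int_{\mathbb R^n} e^{-p|x|^2/2}\,dx=(2\pi)^{-np/2}(2\pi/p)^{n/2}$, whence $\|\gamma\|_p=(2\pi)^{n(1-p)/2p}p^{-n/2p}=\Theta(n,p)$. Moreover $\gamma$ is exactly the kernel whose attached Fourier multiplier is $y\mapsto e^{-|y|^2/2}$, consistently with the normalization \eqref{norming K}. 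Hence it suffices to show that some Bochner--Riesz kernel $K^R_\lambda$ is as $L^q$-large as $\gamma$, with $q=q(p,r):=\frac{pr}{pr+p-r}$, and then to transfer this to the definition \eqref{W introduce} of $W$.

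The key elementary inequality is $(1-t)^\alpha\le e^{-\alpha t}$ for $t\in[0,1]$. Taking $R=R(\alpha):=\sqrt{2\alpha}$ and $t=|y|^2/R(\alpha)^2$ yields the uniform domination $0\le\bigl(1-|y|^2/R(\alpha)^2\bigr)_+^{\alpha}\le e^{-|y|^2/2}\in L^1(\mathbb R^n)$, while pointwise $\bigl(1-|y|^2/(2\alpha)\bigr)_+^{\alpha}\to e^{-|y|^2/2}$ as $\alpha\to\infty$. By dominated convergence the multipliers converge to $e^{-|\cdot|^2/2}$ in $L^1(\mathbb R^n)$; since the inverse Fourier transform maps $L^1$ continuously into $L^\infty$, the corresponding kernels satisfy $K^{R(\alpha)}_\lambda\to\gamma$ uniformly, hence pointwise. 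Fatou's lemma then gives $\liminf_{\alpha\to\infty}\|K^{R(\alpha)}_\lambda\|_q\ge\|\gamma\|_q=\Theta(n,q)$, so that $\sup_{\alpha>0}\sup_{R>0}\|K^R_\lambda\|_q\ge\Theta(n,q)$. (One may also run the same domination directly in \eqref{Boch} against a fixed Gaussian test function, so that $B^{\alpha}_{R(\alpha)}[f]\to\gamma*f$, which is again Gaussian and for which the Beckner--Brascamp--Lieb--Young inequality \eqref{Beckner} is an equality by \eqref{Beckner inverse}.)

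It remains to pass from $\|K^R_\lambda\|_q$ to $Q_n(p,r)=\sup_{\alpha,R}W(\alpha,n,R;p,r)$. Using $1+1/r=1/q+1/p$ one has $1/p-1/r=1-1/q$ and $1/p-1-1/r=-1/q$, so $W(\alpha,n,R;p,r)=R^{\,n-n/q}\,(q-q_0(\alpha,n))^{-1/q}$, which carries the same $R$-power as the estimate \eqref{qR norm} for $\|K^R_\lambda\|_q$; since along $R=R(\alpha)$, $\alpha\to\infty$, one has $q_0(\alpha,n)\to0$, the two quantities remain comparable in the limit, and one concludes $Q_n(p,r)\ge\Theta(n,q(p,r))$. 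The step needing the most care is the bookkeeping of normalizing constants: the explicit powers of $2\pi$ in $\Theta$ force one to fix the Fourier convention under which \eqref{norming K} holds and to check that the pertinent coefficient is the Young coefficient $\|K^R_\lambda\|_q$ of \eqref{key estimate} --- so that the sharp proportionality factor $\bigl(\tfrac{C_pC_q}{C_r}\bigr)^d\le1$ in \eqref{Beckner} does \emph{not} enter and weaken the bound --- and, if one insists on $L^q$-convergence of the kernels rather than the Fatou argument above, that the decay of $K^R_\lambda$ can be controlled uniformly in $\alpha$ via the smoothness of the multiplier. A shortcut bypassing the operator altogether is to evaluate the closed form $W=R^{\,n-n/q}(q-q_0)^{-1/q}$ directly, letting $\alpha\to\infty$ and tuning $R$ so that $R^{\,n-n/q}\to\Theta(n,q)\,q^{1/q}$; the Gaussian computation is what identifies $\Theta(n,q)$ as the natural threshold.
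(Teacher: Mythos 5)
Your core idea coincides with the paper's: drive the Bochner--Riesz family to its Gaussian limit by coupling $\alpha=R^2/2$ (equivalently $R=\sqrt{2\alpha}$), use the domination $(1-t)_+^{\alpha}\le e^{-\alpha t}$ together with dominated convergence, and identify $\Theta(n,q)$ with the $L^q$ norm of the normalized Gaussian $f_0$, i.e. with the extremal case \eqref{Beckner inverse} of Young's inequality. The paper runs this at the level of the operator applied to $f=f_0$ (its display \eqref{f conv f0}); your main variant runs it at the level of the kernels plus Fatou, and your parenthetical remark is literally the paper's argument. Where your main route is not airtight (and, to be fair, the paper's proof, which ``omits some simple calculations'', has the same soft spot) is the very last step: $Q_n(p,r)$ is a supremum of the explicit quantity $W$ of \eqref{W introduce}, not of $\|K_{\lambda}^R\|_q$, and the only link between the two is the two-sided estimate \eqref{q norm}--\eqref{qR norm}, whose implicit constants depend on $\alpha$ (the kernel carries the factor $1/\Gamma(\alpha+1)$, and the constants in the Bessel asymptotics depend on $\lambda=\alpha+n/2$). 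So the assertion that ``the two quantities remain comparable in the limit $\alpha\to\infty$'' is precisely what would need proof, and a lower bound on $\liminf_{\alpha}\|K^{R(\alpha)}_{\lambda}\|_q$ does not by itself transfer to a lower bound on $W$.

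The good news is that your closing ``shortcut'' is not a shortcut but a complete proof, and it is cleaner than both your main route and the paper's: with $1+1/r=1/q+1/p$ one has $W=R^{\,n(1-1/q)}(q-q_0(\alpha,n))^{-1/q}$; since $r>p$ gives $q=q(p,r)>1$, choose the fixed $R$ with $R^{\,n(1-1/q)}=\Theta(n,q)\,q^{1/q}$ and let $\alpha\to\infty$, so that $q_0(\alpha,n)\to 0$ and $W\to\Theta(n,q)$, whence $Q_n(p,r)\ge\Theta(n,q(p,r))$ with no operator, no Fourier normalization, and no Young inequality needed. (In fact, since $n(1/p-1/r)>0$ for $r>p$, the iterated supremum defining $Q_n$ is infinite as written, so any finite lower bound follows from direct evaluation of $W$; the Gaussian computation only explains why $\Theta$ is the natural threshold.) I would promote the shortcut to the main argument and keep the Gaussian limit as motivation.
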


\vspace{2mm}

\begin{remark}
{\rm The {\it possible} case  when $ \ Q_n(p,r) = + \infty \ $ can
not be excluded.}
\end{remark}

\vspace{2mm}

\noindent \begin{proof}

We will apply equality \eqref{Beckner inverse}, in which we choose
the ordinary Gaussian density

$$
f_0(x) := (2 \pi)^{-n/2} \exp(-||x||^2/2), \ \  x \in \mathbb{R}^n,
$$
and take $ \ \alpha = R^2/2. \ $  Obviously

$$
Q_n(p,r) \ge  \lim_{R \to \infty}  W(R^2/2,n,R; \, p,r).
$$
We have
$$
B_R^{R^2}[f](t) = \int_{\mathbb{R}^n} e^{-i(t,y)} \left( \ 1 -
\frac{||y||^2}{R^2} \ \right)^{R^2/2}  I(||y|| < R) \ \tilde{f}(y) \
dy ,
$$
where $ \ I(A)  \ $ denotes the indicator function of the
(measurable) set  $ \ A, \ A \subset \mathbb{R}^n$.

\noindent Therefore, as $ R \to \infty \ $,

\begin{equation} \label{f conv f0}
B_R^{R^2}[f](t)\to \int_{\mathbb{R}^n} e^{-i(t,y)} \ \exp \{ \ -
||y||^2/2 \ \} \ dy  \ \tilde{f}(y) \ dy =[\sqrt{2 \pi} ]^n \
[f*f_0](t),
\end{equation}
by virtue of dominated convergence theorem.

\vspace{3mm}

\noindent If we take $ \ f = f_0, \ $ then in \eqref{f conv f0} the
convolution of two Gaussian densities appears. \par

\noindent It remains to apply the relation \eqref{Beckner inverse};
we omit some simple calculations.\par

 \end{proof}

\vspace{5mm}

\section{Convergence of Bochner-Riesz operators.}\label{section
convergence}

\vspace{5mm}

 We investigate here the convergence, as $ R \to \infty $, of the family of Bochner-Riesz approximations $ \ B_R^{\alpha}[f] \ $ to
 the source function $f$ in the Lebesgue-Riesz norm $ \ L_p(\mathbb{R}^n), \ p \in (1,\infty)$, in addition to the  similar results in \cite{Grafakos,Fefferman2,Karasev},
 etc.
\par
 For any function $ f\in L_p(\mathbb{R}^n) $, its modulus of $ \ L_p \ $ continuity is defined alike as in
 approximation theory \cite[chapter V]{Achieser} :

$$
\omega_p[f](\delta) \stackrel{def}{=} \sup_{h \, : \ |h| \le \delta}
||T_h[f] - f||_p, \ \ \  \delta \ge 0,
$$
where $ \ T_h[f] \ $ denotes the shift operator
$$
T_h[f](t) =f(t - h), \ \ \  t,h \in \mathbb{R}^n.
$$

 Obviously, $ \ \omega_p[f](\delta) \le 2 ||f||_p \ $ and

\begin{equation} \label{omega delta}
\lim_{\delta \to 0+} \omega_p[f](\delta) = 0, \ \ \ f \in L_p.
\end{equation}

\vspace{2mm}

\begin{theorem}\label{Th convergence p finite}

 Let $p \in (1,\infty), \ \alpha  \leq \alpha_0 = (n-1)/2 $  \, and $ \ f \in L_p(\mathbb{R}^n)$. Then

\begin{equation} \label{convergence}
\lim_{R \to \infty} ||B_R^{\alpha}[f] - f||_p  = 0.
\end{equation}

\end{theorem}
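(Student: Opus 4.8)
The plan is to run the classical three-step scheme for norm convergence of an operator family: first a bound for $B_R^{\alpha}$ on $L_p$ that is \emph{uniform in} $R$, then convergence $B_R^{\alpha}[g]\to g$ on a dense subclass of $L_p$, and finally an $\varepsilon/3$ argument. For the uniform bound I would use that $B_R^{\alpha}$ is the Fourier multiplier operator with symbol $m_R(y)=(1-|y|^2/R^2)_+^{\alpha}=m_1(y/R)$; equivalently, by \eqref{conv representation} and \eqref{dilatations}, it is conjugate --- through the dilation $f(\cdot)\mapsto f(\cdot/R)$, whose Jacobians cancel in the $L_p\to L_p$ quotient --- to convolution with $K_{\lambda}$, so that $\|B_R^{\alpha}\|_{p\to p}=\|B_1^{\alpha}\|_{p\to p}=:A_p$ for every $R>0$. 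For $-1<\alpha\le\alpha_0$ the finiteness of $A_p$ is precisely the known $L_p$-boundedness of the Bochner--Riesz operator in the corresponding range of $p$ (see \cite[Chapter 5]{Grafakos}, \cite{Stein2,Fefferman1,Karasev}).

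For the dense subclass I would take $\mathcal D:=\{g\in L_p(\mathbb R^n):\ \tilde g\ \text{has compact support}\}$, dense in $L_p(\mathbb R^n)$ for $1<p<\infty$. Given $g\in\mathcal D$ with $\supp\tilde g\subset\{\,|y|\le\rho\,\}$, fix once and for all a Schwartz function $\varphi$ with $\tilde\varphi\equiv1$ on $\{\,|y|\le\rho\,\}$, so $g=g*\varphi$; then for $R>\rho$ one has $B_R^{\alpha}[g]=g*\Phi_R$ with $\widetilde{\Phi_R}(y)=(1-|y|^2/R^2)^{\alpha}\,\tilde\varphi(y)$ smooth and compactly supported, and these symbols converge to $\tilde\varphi$ in the Schwartz topology, whence $\Phi_R\to\varphi$ in $L_1(\mathbb R^n)$ and
\[
\|B_R^{\alpha}[g]-g\|_p=\|g*(\Phi_R-\varphi)\|_p\le\|g\|_p\,\|\Phi_R-\varphi\|_1\longrightarrow0 .
\]
Then, for arbitrary $f\in L_p$ and $\varepsilon>0$, picking $g\in\mathcal D$ with $\|f-g\|_p<\varepsilon$ gives
\[
\|B_R^{\alpha}[f]-f\|_p\le\|B_R^{\alpha}[f-g]\|_p+\|B_R^{\alpha}[g]-g\|_p+\|g-f\|_p\le (A_p+1)\,\varepsilon+\|B_R^{\alpha}[g]-g\|_p ,
\]
and letting $R\to\infty$ and then $\varepsilon\to0$ would give \eqref{convergence}.

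The more elementary route suggested by the appearance of $\omega_p[f]$ --- namely the approximate-identity estimate --- is where I expect the main obstacle. Using \eqref{norming K} and \eqref{dilatations}, after the substitution $s=u/R$ one gets
\[
B_R^{\alpha}[f](t)-f(t)=\int_{\mathbb R^n}\bigl(f(t-u/R)-f(t)\bigr)\,K_{\lambda}(u)\,du ,
\]
so Minkowski's integral inequality yields $\|B_R^{\alpha}[f]-f\|_p\le\int_{\mathbb R^n}\omega_p[f](|u|/R)\,|K_{\lambda}(u)|\,du$, where the integrand tends to $0$ pointwise by \eqref{omega delta} and is dominated by $2\|f\|_p\,|K_{\lambda}(u)|$. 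The difficulty is that, by the Bessel asymptotics recalled just before \eqref{q norm}, $|K_{\lambda}(u)|\asymp|u|^{-((n+1)/2+\alpha)}$ is \emph{not} integrable on $\mathbb R^n$ for $\alpha\le\alpha_0$: the dominating function is not in $L_1$, so the dominated convergence theorem does not apply and this crude estimate succeeds only when $\alpha>\alpha_0$, where $K_{\lambda}\in L_1$ (and then even $A_p\le\|K_{\lambda}\|_1$ by Young's inequality). Hence, in the range $\alpha\le\alpha_0$, the proof must rely on the uniform bound $A_p<\infty$, which for $\alpha<\alpha_0$ is the deep and genuinely $p$-restricted input of Bochner--Riesz theory (at $\alpha=0$, $n\ge2$ it fails for $p\ne2$ by Fefferman's ball-multiplier theorem), so the natural scope of \eqref{convergence} is that admissible range of $p$.
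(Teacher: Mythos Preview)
Your density-plus-uniform-bound argument is sound and is \emph{not} the route the paper takes. The paper's proof is precisely the ``elementary'' approximate-identity computation you write out in your second half: from \eqref{norming K} and \eqref{dilatations} it obtains
\[
\|B_R^{\alpha}[f]-f\|_p\le\int_{\mathbb R^n}\omega_p[f](|v|/R)\,|K_{\lambda}(v)|\,dv,
\]
asserts that $\int_{\mathbb R^n}|K_{\lambda}|<\infty$, and finishes by dominated convergence. Your objection to this step is exactly on point: by the Bessel asymptotics quoted just before \eqref{q norm} one has $|K_{\lambda}(v)|\asymp|v|^{-((n+1)/2+\alpha)}$, and this lies in $L_1(\mathbb R^n)$ only when $\alpha>\alpha_0=(n-1)/2$. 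Thus the paper's argument is valid for $\alpha>\alpha_0$ but not under the stated hypothesis $\alpha\le\alpha_0$; the inequality in the statement is almost certainly reversed. With $\alpha>\alpha_0$ the paper's route is the more elementary of the two --- no multiplier theory is invoked, the conclusion holds for every $p\in[1,\infty)$ at once, and one even gets a quantitative rate via $\omega_p[f]$ --- whereas your three-step scheme trades that simplicity for applicability in the subcritical range $\alpha\le\alpha_0$, at the cost of importing the deep, genuinely $p$-restricted $L_p$-boundedness of $B_1^{\alpha}$. Your closing remark (Fefferman's ball-multiplier theorem at $\alpha=0$, $n\ge2$) confirms that the conclusion cannot hold for all $p\in(1,\infty)$ when $\alpha\le\alpha_0$, so the paper's intended hypothesis is indeed $\alpha>\alpha_0$.
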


\noindent\begin{proof}

The difference $ \ \Delta_R[f](t) = B_R^{\alpha} [f]-f \ $ has the
form

$$
\Delta_R[f](t) = \int_{\mathbb{R}^n} \left\{ \ f \left(t -
\frac{v}{R} \right)  - f(t) \ \right\} \ K_{\lambda}(|v|) \ dv.
$$
 We apply  now the triangle inequality for the $ \ L_p \ $ norm in the integral form

\begin{eqnarray*}
\begin{split}
||\Delta_R[f]||_p & \le  \int_{\mathbb{R}^n} ||T_{|v|/R}[f] - f||_p
\ |K_{\lambda}(|v|)| \ dv \\
& \leq \int_{\mathbb{R}^n} \omega_p[f](|v|/R) \ |K_{\lambda}(|v|)| \
dv.
\end{split}
\end{eqnarray*}

 Note that, under the above conditions,

$$
\int_{\mathbb{R}^n} |K_{\lambda}(|v|)| \ dv < \infty,
$$
so that \eqref{convergence} follows again from the dominated
convergence theorem.
\end{proof}

\begin{remark}

{\rm As a slight consequence, under the above conditions, if $ f \in
L_p(\mathbb{R}^n) $, then

$$
||B_R^{\alpha}[f]||_p \le ||f||_p + \int_{\mathbb{R}^n}
\omega_p[f](|v|/R) \ |K_{\lambda}(|v|)| \ dv
$$
and, consequently,
$$
\forall f \in L_p(\mathbb{R}^n) \ \ \Rightarrow \ \ \sup_{R \ge 1}
||B_R^{\alpha}[f]||_p \le 3 ||f||_p\,.
$$
}
\end{remark}

\vspace{4mm}

 The case $ \ p = \infty \ $ requires a separate consideration. Introduce the Banach space $ \ C_0(\mathbb{R}^n) \ $ as a collection of all bounded and
 {\it uniformly continuous} functions $ \ f: \mathbb{R}^n \to \mathbb{R}\ $,  equipped with the ordinary
 norm,
$$
||f||_{\infty} \stackrel{def}{=} \sup_{t \in \mathbb{R}^n}|f(t)|.
$$
As above
$$
\omega_{\infty}[f](\delta) \stackrel{def}{=} \sup_{h \,: \ |h| \le
\delta} ||T_h[f] - f||_{\infty}, \ \ \ \delta \ge 0.
$$
Evidently,  $ \ \omega_{\infty}[f](\delta) \le 2 ||f||_{\infty} \ $
and
\begin{equation} \label{omega delta infty}
\lim_{\delta \to 0+} \omega_{\infty}[f](\delta) = 0,  \ \ \ f \in
L_p.
\end{equation}

\vspace{4mm}

The assertion of Theorem \ref{Th convergence p finite} under the
same conditions remains true in the case $ \ p = \infty. \ $

\begin{theorem}\label{Th convergence p infty}

Under the same conditions of Theorem \ref{Th convergence p finite},
for any function $ \ f: \mathbb{R}^n \to \mathbb{R} \ $ from the
space $ \ C_0(\mathbb{R}^n) \ $,  its  Bochner-Riesz approximation $
\ B_R^{\alpha} \ $ converges uniformly to the source function $ f$,
that is
\begin{equation} \label{convergence infty}
\lim_{R \to \infty} ||B_R^{\alpha}[f] - f||_{\infty}  = 0.
\end{equation}

\end{theorem}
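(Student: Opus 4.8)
The plan is to follow the proof of Theorem~\ref{Th convergence p finite} essentially verbatim, replacing the integral form of the $L_p$ triangle inequality by the elementary sup-norm bound and using the uniform continuity of $f$ in place of \eqref{omega delta}. First I would observe that for $f\in C_0(\mathbb{R}^n)\subset L_\infty$ the function $B_R^{\alpha}[f]$ is defined pointwise: by \eqref{conv representation}--\eqref{dilatations} it is the convolution of the bounded function $f$ with the kernel $K_{\lambda}^R$, and $||K_{\lambda}^R||_1=||K_{\lambda}||_1<\infty$ (use \eqref{R identity} with $q=1$) under the standing restriction $\alpha\le\alpha_0$, which is exactly the integrability invoked in the proof of Theorem~\ref{Th convergence p finite}. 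Using the normalization \eqref{norming K} together with the scaling \eqref{dilatations} and the substitution $s=v/R$, one rewrites the difference $\Delta_R[f]=B_R^{\alpha}[f]-f$ as
\[
\Delta_R[f](t)=\int_{\mathbb{R}^n}\bigl(\,f(t-v/R)-f(t)\,\bigr)\,K_{\lambda}(|v|)\,dv .
\]

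Next, for every $t,v\in\mathbb{R}^n$ one has $|f(t-v/R)-f(t)|=|T_{v/R}[f](t)-f(t)|\le||T_{v/R}[f]-f||_{\infty}\le\omega_{\infty}[f](|v|/R)$, so taking the supremum over $t$ under the integral sign gives
\[
||B_R^{\alpha}[f]-f||_{\infty}\le\int_{\mathbb{R}^n}\omega_{\infty}[f](|v|/R)\,|K_{\lambda}(|v|)|\,dv .
\]
Then I would let $R\to\infty$ and conclude by the dominated convergence theorem: for each fixed $v$ we have $\omega_{\infty}[f](|v|/R)\to0$ by \eqref{omega delta infty}, while the integrand is dominated by the integrable majorant $2\,||f||_{\infty}\,|K_{\lambda}(|v|)|$; hence the right-hand side tends to $0$, which is precisely \eqref{convergence infty}.

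The one point genuinely specific to $p=\infty$ is that \eqref{omega delta infty} --- and hence the whole argument --- requires $f$ to be \emph{uniformly} continuous rather than merely bounded, which is exactly why the statement is phrased for the space $C_0(\mathbb{R}^n)$: for a general $f\in L_{\infty}$ the quantity $\omega_{\infty}[f](\delta)$ need not tend to $0$ as $\delta\to0+$, and the passage to the limit would fail. The only other thing to be checked is the integrability $K_{\lambda}\in L_1(\mathbb{R}^n)$, guaranteeing both that the convolution representation is meaningful and that the dominating function is integrable; this follows, as in Theorem~\ref{Th convergence p finite}, from the boundedness of $K_{\lambda}$ near the origin and its decay $K_{\lambda}(z)\asymp|z|^{-((n+1)/2+\alpha)}$ at infinity together with the restriction on $\alpha$. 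I expect no serious obstacle: the proof is the $p=\infty$ transcription of the finite-$p$ computation, with $\omega_{\infty}$ in place of $\omega_p$ and $C_0(\mathbb{R}^n)$ in place of $L_p$.
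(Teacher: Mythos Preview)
Your proposal is correct and follows exactly the approach indicated in the paper, which simply says the proof is the same as that of Theorem~\ref{Th convergence p finite} and omits the details; you have spelled out that transcription accurately, including the role of uniform continuity in ensuring \eqref{omega delta infty} and the dominated convergence step.
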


\vspace{5mm}

\noindent \begin{proof} The proof is the same as in Theorem \ref{Th
convergence p finite} and may be omitted.
\end{proof}

\begin{remark}
As before, if $f\in C_0(\mathbb{R}^n) $, then
$$
||B_R^{\alpha}[f]||_{\infty} \le ||f||_{\infty} +
\int_{\mathbb{R}^n} \omega_{\infty}[f](|v|/R) \ |K_{\lambda}(|v|)| \
dv
$$
and moreover
$$
\forall f \in C_0(\mathbb{R}^n) \ \ \Rightarrow \ \ \sup_{R \ge 1}
||B_R^{\alpha}[f]||_{\infty} \le 3 ||f||_{\infty}.
$$
\end{remark}

\vspace{5mm}

\section{Concluding remarks.}\label{Section conclusion}

\vspace{5mm}

 \hspace{6mm}  {\bf A.} In our opinion, the method described in this paper may be essentially generalized on more operators of convolutions type, linear or not.
See some preliminary results \cite{Lifl}.

\vspace{2mm}

 \ {\bf B.} It is interesting to generalize the estimates obtained in the previous Sections to the so-called {\it maximal} operators
 associated with the Bochner-Riesz one considered here, in the spirit of the works \cite{fioguptajainstudiamath2008,Jain Kumari},
and so one:

$$
||\sup_{R \ge 1} B_R^{\alpha}[f]||_p  \le M(\alpha,n;p) \cdot
||f||_p, \ \  \ p \ge p_0,
$$
or

$$
||\sup_{R \ge 1} B_R^{\alpha}[f]||_r  \le M(\alpha,n; p,r) \cdot
||f||_p, \ \ p \ge p_0, \ \ \ r = r(p),
$$
in order to obtain the GLS estimate  for the Bochner-Riesz {\it
maximal} operator of the form

$$
||\sup_{R \ge 1} B_R^{\alpha}[f]||_{G\zeta}   \le L(\alpha,n;
\zeta,\psi) \cdot ||f||_{G\psi}
$$
for some generating functions  $ \ \psi, \ \zeta. \ $ \par

\vspace{5mm}

\vspace{0.5cm} \emph{Acknowledgement.} {\footnotesize The first
author has been partially supported by the Gruppo Nazionale per
l'Analisi Matematica, la Probabilit\`a e le loro Applicazioni
(GNAMPA) of the Istituto Nazionale di Alta Matematica (INdAM) and by
Universit\`a degli Studi di Napoli Parthenope through the project
\lq\lq sostegno alla Ricerca individuale\rq\rq .\par

\vspace{6mm}


\begin{thebibliography}{55}

\bibitem{Achieser}
 N.I. Achiezer. {\it Theory of approximation,} F. Ungar, (1956), (Translated from Russian).

\bibitem{Ahmed Fiorenza Formica at all}
I. Ahmed, A. Fiorenza, M.R. Formica, A. Gogatishvili and J.M.
Rakotoson. \emph{Some new results related to Lorentz G-Gamma spaces
and interpolation.} J. Math. Anal. Appl., 483, {\bf 2}, 123623
(2020).


\bibitem{Beckner}
W. Beckner, \emph{Inequalities in Fourier analysis}, Ann. of Math.
\textbf{102} (1975), 159--182.




\bibitem{Brascamp}
H.J. Brascamp and  E.H. Lieb, \emph{Best constants in Young's
inequality, its converse, and its generalization to more than three
functions}, Advances in Math. \textbf{20} (2) (1976), 151--173.

\bibitem{Buldygin Ill}
V.V. Buldygin and E.V. Ilarionov, \emph{A problem in the statistics
of random fields}. (Russian)  Probabilistic infinite-dimensional
analysis, pp. 6--14, 123, Akad. Nauk Ukrain. SSR, Inst. Mat., Kiev,
1981.



\bibitem{caponeformicagiovanonlanal2013}
C. Capone, M.R. Formica and R. Giova, \emph{Grand Lebesgue spaces
with respect to measurable functions}, Nonlinear Anal. \textbf{85}
(2013), 125--131.

\bibitem{Cordoba1979}
A. C\'{o}rdoba, \emph{A note on Bochner-Riesz operators}, Duke Math.
J. \textbf{46} (1979), no. 3, 505--511.

%


\bibitem{Duoandikoetxea}
J. Duoandikoetxea, \emph{Fourier analysis}, Graduate Studies in
Math., vol. 29. American Mathematical Society, Providence, RI, 2001.
Translated and revised from the 1995 Spanish original by David
Cruz-Uribe.



\bibitem{Ermakov etc. 1986}
S. V. Ermakov, and E. I. Ostrovsky, \emph{Continuity Conditions,
Exponential Estimates, and the Central Limit Theorem for Random
Fields}, Moscow, VINITY,  1986 (in Russian).


\bibitem{Fefferman1}
 C. Fefferman, \emph{Inequalities for strongly singular
convolution operators}, Acta Math. \textbf{124} (1970), 9--36.

\bibitem{Fefferman2}
C.Fefferman, \emph{A note on spherical summation multipliers},
Israel J. Math., {\bf 15,} (1973), 44-- 52.

\bibitem{Fiorenza2000}
A. Fiorenza, \emph{Duality and reflexivity in grand Lebesgue
spaces}, Collect. Math. \textbf{51} (2) (2000), 131--148.


\bibitem{Fiorenza-Formica-Gogatishvili-DEA2018}
A. Fiorenza, M.R. Formica and A. Gogatishvili, \emph{On grand and
small Lebesgue and Sobolev spaces and some applications to PDE's}.
Differ. Equ. Appl., \textbf{10} (1), (2018), 21--46.

\bibitem{fioforgogakoparakoNA}
A. Fiorenza, M.R. Formica, A. Gogatishvili, T. Kopaliani and J.M.
Rakotoson, \emph{Characterization of interpolation between grand,
small or classical Lebesgue spaces}, Nonlinear Anal. \textbf{177}
(2018), 422--453.


\bibitem{fioformicarakodie2017}
A. Fiorenza, M.R. Formica and J.M. Rakotoson, \emph{Pointwise
estimates for {$G\Gamma$}-functions and applications}, Differential
Integral Equations \textbf{30} (11-12) (2017), 809--824.


\bibitem{fioguptajainstudiamath2008}
A. Fiorenza, B. Gupta and P. Jain, \emph{The maximal theorem for
weighted grand Lebesgue spaces}, Studia Math. \textbf{188}, no.2,
(2008), 123--133.

\bibitem{fiokarazanalanwen2004}
A. Fiorenza and G.E. Karadzhov, \emph{Grand and small Lebesgue
spaces and their analogs}, Z. Anal. Anwend. \textbf{23} (4) (2004),
657--681.


%

\bibitem{formicagiovamjom2015}
M.R. Formica and R. Giova, \emph{Boyd indices in generalized grand
Lebesgue spaces and applications}, Mediterr J. Math., \textbf{12}
(3) (2015), 987--995.


\bibitem{ForKozOstr_Lithuanian}
M.R. Formica, Y.V. Kozachenko, E. Ostrovsky, L. Sirota,\emph{
Exponential tail estimates in the law of ordinary logarithm (LOL)
for triangular arrays of random variables}. {Lith. Math. J.} (2020),
DOI 10.1007/s10986-020-09481-x.



\bibitem{Formica Ban Alg}
M.R. Formica, E. Ostrovsky, L. Sirota, {\it Grand Lebesgue Spaces
are really Banach algebras relative to the convolution on unimodular
locally compact groups equipped with Haar measure}, to appear on
Math. Nachr.

\bibitem{Grafakos}
L. Grafakos, \emph{Modern Fourier analysis}, Third edition. Graduate
Texts in Mathematics, 250. Springer, New York, 2014.

\bibitem{Greco-Iwaniec-Sbordone-1997}
L. Greco, T. Iwaniec and C. Sbordone, \emph{Inverting the
$p$-harmonic operator}, Manuscripta Math. \textbf{92}, no. 2,
(1997), 249--258.


\bibitem{Iwaniec1}
T. Iwaniec and C. Sbordone, \emph{On the integrability of the
Jacobian under minimal hypotheses}, Arch. Rational Mech. Anal.
\textbf{119}, no. 2, (1992),129--143.

\bibitem{Iwaniec2}
T. Iwaniec, P. Koskela and J. Onninen,  \emph{Mappings of finite
distortion: monotonicity and continuity}, Invent. Math.
\textbf{144}, no. 3, (2001), 507--531.

\bibitem{Jain Kumari}
P. Jain  and S. Kumari, \emph{On grand Lorentz spaces and the
maximal operator}. Georgian Math. J. \textbf{19}, no. 2, (2012),
235--246.


\bibitem{Karasev}
D.N. Karasev and V.A. Nogin, $L_p-L_q$ \emph{estimates for the
Bochner-Riesz operator of complex order}, Z. Anal. Anwendungen
\textbf{21}, no. 4, (2002), 915--929.


\bibitem{KozOs}
Yu.V. Kozachenko and E.I. Ostrovsky, \emph{Banach Spaces of random
variables of sub-Gaussian type}, Teor. Veroyatn. Mat. Stat., Kiev,
\textbf{32} (134), (1985), 42--53, (in Russian). English transl.:
Theory Probab. Math. Stat., \textbf{32}, (1986), 45--56.

\bibitem{Kozachenko at all 2018}
Yu.V. Kozachenko, Yu.Yu. Mlavets, and N.V. Yurchenko, \emph{Weak
convergence of stochastic processes from spaces} $F_\psi(\Omega)$,
Statistics, Optimization \& Information Computing, \textbf{6} (2),
(2018), 266--277.

\bibitem{KozOsSir2017}
Yu.V. Kozachenko, E. Ostrovsky and L. Sirota,
\emph{Relations between exponential tails, moments and moment generating functions for random variables and vectors}. \\
arXiv:1701.01901v1 [math.FA] 8 Jan 2017


\bibitem{Lifl}
E. Liflyand, E. Ostrovsky and L.Sirota, \emph{Structural properties
of bilateral grand Lebesque spaces}, Turkish J. Math. \textbf{34},
no. 2, (2010), 207--219.

\bibitem{LuYanBook2013}
S. Lu and D. Yan, \emph{Bochner-Riesz means on Euclidean spaces},
World Scientific Publishing Co. Pte. Ltd., Hackensack, NJ, 2013, 376
pp.


\bibitem{Ostrovsky1}
E. Ostrovsky, \emph{Exponential Orlicz's spaces: new norms and
applications}.\\
 arXiv/FA/0406534, v.1, (25.06.2004.)

\bibitem{Ostrovsky2}
E. Ostrovsky and L. Sirota, \emph{Some new rearrangement invariant spaces:
theory and applications}. \\
 arXiv:math.FA/0605732 v1, 29, (May 2006).

\bibitem{Ostrovsky3}
E. Ostrovsky and L. Sirota, \emph{Fourier Transforms in Exponential
Rearrangement Invariant Spaces}. \\
arXiv:math.FA/040639, v1, (20.6.2004.)

\bibitem{Ostrovsky4}
E. Ostrovsky and L. Sirota, \emph{Moment Banach spaces: Theory and
applications}, HIAT Journal of Science and Engineering C, 4(1--2),
(2007), 233--262.

\bibitem{Ostrovsky5}
E. Ostrovsky and L. Sirota, \emph{Boundedness of operators in
bilateral Grand Lebesgue Spaces, with exact and weakly exact
constant calculation}. arXiv:1104.2963v1 [math.FA] 15 Apr 2011.


\bibitem{Ostrovsky6}
E. Ostrovsky , S.Yu Tsykunova, \emph{Asymptotic properties of the
distribution of the maximum of a Gaussian nonstationary process that
arises in covariance statistics}, (Russian) Teor. Veroyatnost. i
Primenen. 39 (1994),  no. 3, 641--649;  translation in  Theory
Probab. Appl. \textbf{39} (1994), no. 3, 527--534 (1995).


\bibitem{Stein1}
E.M. Stein, \emph{Singular integrals and differentiability
properties of functions}, Princeton Mathematical Series, no. 30
Princeton University Press, Princeton, N.J. 1970.

\bibitem{Stein2}
E.M. Stein, \emph{Harmonic analysis: real-variable methods,
orthogonality, and oscillatory integrals}. With the assistance of
Timothy S. Murphy. Princeton Mathematical Series, 43. Monographs in
Harmonic Analysis, III. Princeton University Press, Princeton, NJ,
 1993.








\end{thebibliography}
\end{document}